\documentclass{amsart}
\usepackage{graphicx}           

\usepackage{amsmath} 
\usepackage{amssymb}            
\usepackage{amsthm}             
\usepackage{enumerate} 
\usepackage{mathdots} 
\usepackage{acronym}

\DeclareMathOperator{\sign}{sign}

\newcommand{\RE}{\mathbb{R}}
\newcommand{\rd}{\mathrm{d}}

\newtheorem{thm}{Theorem}


\acrodef{SMC}{Sliding-Mode Control}
\acrodef{HOSMC}{Higher-Order Sliding-Mode Control}

\begin{document}

\title{Pole-Placement in \\  Higher-Order Sliding-Mode Control}




\author{Debbie Hern\'andez}
\author{Fernando Casta\~nos}
\address[D. Hern\'andez and F. Casta\~nos]{
 Departamento de Control Autom\'atico, Cinvestav del IPN, M\'exico.
}
\author{Leonid Fridman}
\address[L. Fridman]{Departamento de Ingenier\'ia de Control y Rob\'otica, Divisi\'on de Ingenier\'ia El\'ectrica,
Facultad de Ingenier\'ia, UNAM, M\'exico}

\email[F.~Casta\~nos]{castanos@ieee.org}

\thanks{Research supported by Conacyt, Mexico.}

\keywords{Pole-Placement; Sliding-mode control; Robust stability}

\begin{abstract}                          
 We show that the well-known formula by Ackermann and Utkin can be generalized to the case of higher-order sliding modes.
 By interpreting the eigenvalue assignment of the sliding dynamics as a \emph{zero-placement} problem, the generalization
 becomes straightforward and the proof is greatly simplified. The generalized formula retains the simplicity of the
 original one while allowing to construct the sliding variable of a single-input linear time-invariant system in such a way that it has
 desired relative degree and desired sliding-mode dynamics. The formula can be used as part of a higher-order
 sliding-mode control design methodology, achieving high accuracy and robustness at the same time.
\end{abstract}

\maketitle


\section{Introduction}

\ac{SMC} is by now well known for its robustness properties in the face of
unmatched perturbations and uncertainties~\cite{edwards,utkin}. In the \ac{SMC} approach
the designer first chooses an output with well-defined relative degree and such that the
system is minimum phase. On a second step, the designer devises a control law that drives
the output to zero. Phase minimality then ensures that the system states go to zero along with the output.
A salient feature of \ac{SMC} is that the output (\emph{sliding variable} in the \ac{SMC} literature) is driven exactly
to zero in finite time, even in the presence of matched perturbations.

Conventional \ac{SMC} is restricted to outputs of relative degree equal to one\footnote{This restriction can be
also found, e.g., in passivity based control: It was shown in~\cite{byrnes1991} that a system is feedback
equivalent to a passive system if, and only if, it is minimum phase and its output is of relative degree one.}.
In contrast, modern \ac{SMC} theory (i.e., \ac{HOSMC}) allows for sliding variables with relative degree
higher than one~\cite{levant2003}.

Conventional \ac{SMC} theory is fairly complete in the sense that there exist several methods for choosing a sliding variable with
desired zero dynamics (\emph{sliding-mode dynamics} in the \ac{SMC} literature). One possibility is to put the system in the so-called
regular form and use part of the state as a virtual control that will realize the desired sliding-mode dynamics on a lower dimensional
system~\cite[Sec. 5.1]{utkin}. If the system is single-input, a sliding variable with desired sliding-mode
dynamics can be found without recourse to a coordinate transformation, using the formula by Ackermann and Utkin~\cite{ackermann1998}.
A third possibility is to use the more recent formula presented in~\cite{drazenovic2011}, which works in the multi-input case and also
obviates the need to transform the system into a regular form. Regarding the control law, it is now well-known that a sliding variable of relative degree
one can be robustly driven to zero in finite time by means of a simple unit control with enough gain~\cite[Sec. 3.5]{utkin}.

\subsection{Motivation}

\ac{HOSMC} is under intensive development~\cite{bartolini2003,laghrouche2007,levant2009,orlov,pisano2011,moreno2012}. A major achievement
in this area is the finding of a complete family of sliding-mode controllers that can robustly drive to zero
a sliding variable of arbitrary degree~\cite{levant2005b}. While a high-order sliding variable might appear naturally in specific cases (e.g.,
in the differentiation problem or in the estimation problem), there is at the present
no general design methodology for choosing a sliding variable with prescribed relative degree and prescribed sliding-mode dynamics.
The work reported on this paper is motivated by the need to fill this gap.

Allow us illustrate with the simple chain of integrators
\begin{align*}
 \dot{x}_1 &= x_2 \;, \\
 \dot{x}_2 &= x_3 \;, \\
 \dot{x}_3 &= u + w \;,
\end{align*}
where $x \in \RE^3$ is the state and $u,w \in \RE$ are the control and the unknown perturbation at time $t$ (we omit the time arguments
for ease of notation). Suppose that we want to stabilize the origin.

In the conventional approach one chooses first a sliding
variable of relative degree 1 and such that the associated 2-dimensional
sliding dynamics are stable. Suppose, e.g., we desire sliding dynamics having an eigenvalue $-1$ with multiplicity 2. We can use
the well-known formula by Ackermann and Utkin to obtain the sliding variable $\sigma = x_1 + 2x_2 + x_3$. Finally, we can apply the control
law $u = - x_2 - 2x_3 - \bar{w}\sign(\sigma)$, where $\bar{w}$ is a known upper bound for $|w|$. It is not hard to see that the trajectories
converge globally asymptotically to zero, regardless of $w$.

Consider now the case $\sigma = x_1$. The relative degree of $\sigma$ is equal to the system's dimension, so there are no sliding 
dynamics to worry about. It is by now a standard result of \ac{HOSMC} theory that the (substantially more complex) controller 
\begin{equation} \label{eq:mot}
 u = -k_0 \frac{\ddot{\sigma} + 2(|\dot{\sigma}|+|\sigma|^{2/3})^{-1/2}(\dot{\sigma}+|\sigma|^{2/3}\sign(\sigma))}
  {|\ddot{\sigma}| + 2(|\dot{\sigma}|+|\sigma|^{2/3})^{1/2}} \;,
\end{equation}
with $\alpha > 0$ high enough, drives the state to zero in finite time, regardless of $w$.

The computation of a sliding variable of relative degree equal to the dimension of the plant was simple because
the system is in a canonical form. This suggests that, for a general linear controllable system, we first put it in controller canonical form
and then take the state with highest relative degree as the sliding variable. In this way, the extreme case of relative degree equal to the
system's dimension (no sliding dynamics) can be covered systematically. The other extreme case, that of relative degree 1 (sliding dynamics
of codimension 1), can be covered using Ackermann and Utkin's formula. Note, however, that there is no systematic method for constructing
a sliding variable of intermediate relative degree (in our example, of relative degree 2). To such a sliding variable there would correspond
a sliding dynamics of dimension 1. This dynamics can be enforced with a controller much simpler than~\eqref{eq:mot}, thus arriving at a fair
compromise between order reduction and controller complexity.

\subsection{Contribution}

Our main contribution, Theorem~\ref{thm:main}, concerns single-input linear time-invariant (LTI) systems. The selection of the sliding variable
is interpreted as a zero-placement problem, which allows us to generalize the formula of Ackermann and Utkin to the case of arbitrary relative
degree. Our proof is simpler (more insightful) than the proof of the original problem. The formula makes it possible for the designer to construct a sliding
variable with desired sliding-mode dynamics of arbitrary dimension.

For the case of relative degree 2 in our motivational example above, application of Theorem~\ref{thm:main} to a sliding dynamics
with desired eigenvalue -1 gives the sliding surface $\sigma = x_1 + x_2$. The sliding dynamics can be enforced, e.g., with the twisting
controller $u = -x_3 - k_0\sign(\sigma) - k_1\sign(\dot{\sigma})$, where $k_0$ and $k_1$ are high enough to reject $w$.  

\subsection{Paper Structure}

In the following section we give some preliminaries on relative degree, zero dynamics and \ac{SMC}. The section is included mainly
to set up the notation and to provide some context for our main result, which is contained in Section~\ref{sec:main}.
Section~\ref{sec:exa} provides a thorough example and the conclusions are given in Section~\ref{sec:conc}.

\section{Preliminaries}

Consider the LTI system
\begin{subequations} \label{eq:LTI}
\begin{equation} 
 \dot{x} = Ax + B(u+w) \;, \quad x \in \RE^n \;, \quad u,w \in \RE \;,
\end{equation}
where $x$ is the state, $u$ the control and $w$ the unknown perturbation at time $t$ (we omit the time arguments).
The pair $(A,B)$ is assumed to be controllable. Suppose that we want to steer $x$ to zero despite the presence of $w$.
The problem can be approached in two steps: First, find a `virtual' output 
\begin{equation}
 \sigma = Cx \;, \quad \sigma \in \RE
\end{equation}
such that $\sigma \equiv 0$ implies $x \to 0$ as $t \to \infty$. Next, design a feedback control law that ensures that
$\sigma \to 0$ either as $t \to \infty$ or as $t \to  T$, $T > 0$, depending on the desired degree of smoothness and robustness of the controller.
\end{subequations}

\subsection{Relative degree and zero dynamics}

Recall that~\eqref{eq:LTI} is said to have \emph{relative degree} $r$ if
$C A^{i-1} B = 0$, $1 \leq i < r$ and $C A^{r-1} B \neq 0$.
If~\eqref{eq:LTI} has relative degree $r$, then it is possible to take $\sigma$ and its successive 
$r-1$ time-derivatives as a partial set of coordinates $\xi_1,\dots,\xi_r$. More precisely,
there exists a full-rank matrix $B^\perp \in \RE^{(n-r)\times n}$ such that $B^\perp B = 0$ and
\begin{displaymath}
 \left[
 \begin{array}{c}
  \eta \\ \hline \xi
 \end{array}
 \right] = 
 \left[
 \begin{array}{c}
  B^\perp \\ \hline C \\ \vdots \\ CA^{r-1}
 \end{array}
 \right] x
\end{displaymath}
is a coordinate transformation, that is, $T$ is invertible~\cite[Prop. 4.1.3]{isidori}. It is straightforward to
verify that, in the new coordinates, system~\eqref{eq:LTI} takes the \emph{normal form}
\begin{subequations}
\begin{align}
 \left[
 \begin{array}{c}
  \dot{\eta} \\ \hline \dot{\xi}_1 \\ \vdots \\ \dot{\xi}_{r-1} \\ \dot{\xi}_r
 \end{array}
 \right]
 &= 
 \left[
 \begin{array}{c}
  A_0 \eta + B_0 \xi \\ \hline \xi_2 \\ \vdots \\ \xi_r \\ CA^{r}x
 \end{array}
 \right] + 
 \left[
 \begin{array}{c}
  0 \\ \hline 0 \\ \vdots \\ 0 \\ CA^{r-1}B 
 \end{array}
 \right] (u + w) \label{eq:nfs} \\
 \sigma &= \xi_1 \;.
\end{align}
\end{subequations}
The dynamics $\dot{\eta} = A_0 \eta$, $\eta \in \RE^{n-r}$, are the \emph{zero dynamics}. It is well known~\cite[Ex. 4.1.3]{marino} that the eigenvalues
of $A_0$ coincide with the zeros of the transfer function
\begin{displaymath}
 g(s) = C(sI-A)^{-1}B \;.
\end{displaymath}
If the zeros of $g(s)$ have real part strictly less than zero, we say that the system is \emph{minimum phase}. Thus, we can
reformulate our first step as: \emph{find a virtual output such that~\eqref{eq:LTI} has stable zeros at desired locations}.

\subsection{Sliding-mode control}

If $w$ is majored by a known bound, then the robust stabilization objective can be accomplished using nonsmooth
control laws (solutions of differential equations with discontinuous right-hands are taken in Fillipov's sense). In
conventional first-order \ac{SMC}~\cite{utkin}, the search for $\sigma$ is confined to outputs of relative degree one
and the control takes the form\footnote{At the expense of the loss of global stability and a higher gain $k_0$, the
term $CAx$ is sometimes omitted by incorporating it into $w$.}
\begin{equation} \label{eq:ford}
 u = -\frac{CAx + k_0 \sign(\xi_1)}{CB} 
\end{equation}
with $k_0 > |w|$. This control law guaranties that $\sigma$ will reach zero in a finite time $T$ and will stay at zero
for all future time, regardless of the presence of $w$. The matrix $C$ can be set using the formula by Ackermann and Utkin,
recalled in the following theorem.

\begin{thm}[\cite{ackermann1998}] \label{thm:AkUt}
Let $e_1 := [\begin{matrix} 0 & 0 & \cdots & 0 & 1 \end{matrix}]$ and let $P$ be the system's controllability matrix.
If $C = e_1 P^{-1}\beta(A)$ with $\beta(\lambda) = \lambda^{n-1} + \beta_{n-2}\lambda^{n-2} + \cdots + \beta_{1}\lambda + \beta_{0}$, then
the roots of $\beta(\lambda)$ are the eigenvalues of the sliding-mode dynamics in the plane $\sigma = 0$.
\end{thm}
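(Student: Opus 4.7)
The plan is to leverage the fact, recalled in the preliminaries, that the eigenvalues of the sliding dynamics coincide with the zeros of the transfer function $g(s) = C(sI-A)^{-1}B$. It then suffices to verify that $\sigma = Cx$ has relative degree one (so the first-order sliding-mode framework of \eqref{eq:ford} applies) and that the roots of $\beta(\lambda)$ are exactly the zeros of $g(s)$.

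First I would set $q^{\top} := e_1 P^{-1}$ and read off, directly from $q^{\top}P = e_1 = [0\;\cdots\;0\;1]$, the dual chain of relations $q^{\top}A^{k}B = 0$ for $k = 0, \dots, n-2$ and $q^{\top}A^{n-1}B = 1$. Because $C = q^{\top}\beta(A)$ with $\beta$ monic of degree $n-1$, this immediately yields $CB = q^{\top}A^{n-1}B = 1$, so the relative-degree claim is settled and the equivalent control is well defined.

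Next I would introduce the coordinate change $\tilde{x} = Tx$ whose rows are $t_i^{\top} := q^{\top}A^{i-1}$, $i = 1,\dots,n$. The dual relations make $TP$ have ones on the antidiagonal and zeros strictly above it, so $T$ is nonsingular; a short computation using the Cayley--Hamilton theorem on $t_n^{\top}A = q^{\top}A^{n}$ then shows that $T$ realizes the standard controller canonical form. Since $q^{\top}A^{k} = t_{k+1}^{\top}$, the sliding variable becomes $\sigma = \tilde{x}_n + \beta_{n-2}\tilde{x}_{n-1} + \cdots + \beta_0 \tilde{x}_1$. Imposing $\sigma \equiv 0$ solves $\tilde{x}_n$ as a linear combination of $\tilde{x}_1,\dots,\tilde{x}_{n-1}$, and substitution into the integrator chain $\dot{\tilde{x}}_i = \tilde{x}_{i+1}$ produces an autonomous companion system of dimension $n-1$ whose characteristic polynomial is precisely $\beta(\lambda)$.

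The only nontrivial bookkeeping lies in the invertibility of $T$ and its realization of the controller canonical form, but these are classical facts that reduce entirely to the $\delta_{k,n-1}$ relations already in hand. This is precisely what makes the \emph{zero-placement} viewpoint attractive: the factor $\beta(A)$ shifts the last canonical coordinate into a prescribed linear combination of the first $n-1$, after which the companion structure delivers the eigenvalues automatically.
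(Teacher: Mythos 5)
Your proof is correct, and it is worth separating what coincides with the paper from what does not. The skeleton is the same one the paper uses when it proves the generalization (Theorem~\ref{thm:main}): pass to controller canonical form and observe that in those coordinates the row vector $C$ becomes $[\beta_0\;\cdots\;\beta_{n-2}\;1]$. You construct the canonical transformation explicitly from $q^{\top}=e_1P^{-1}$ and the relations $q^{\top}A^{k}B=\delta_{k,n-1}$, whereas the paper assumes the canonical form up front and disposes of general coordinates with a similarity argument at the end; these are equivalent, and your invertibility argument for $T$ via the anti-triangular structure of $TP$ is sound. Where you genuinely diverge is the final step. You announce in your opening paragraph that you will identify the roots of $\beta$ with the zeros of $g(s)$ and invoke the zeros-equal-zero-dynamics-eigenvalues fact, but you never actually do this: instead you eliminate $\tilde{x}_n$ on $\sigma=0$ and read off the companion matrix of the reduced $(n-1)$-dimensional sliding dynamics directly. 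That direct elimination is closer in spirit to Ackermann and Utkin's original argument and is more self-contained, since it does not rely on the cited identification of the eigenvalues of $A_0$ with the zeros of $g(s)$. The paper's route --- read the numerator of $g(s)$ as $\beta(s)$ and then invoke that identification --- is precisely what makes the generalization to relative degree $r>1$ immediate, because for $r>1$ the manifold is cut out by $\sigma=\dot{\sigma}=\cdots=\sigma^{(r-1)}=0$ and the hand elimination you perform becomes considerably messier. I would either delete the appeal to $g(s)$ from your first paragraph or carry it through consistently; as written, the stated strategy and the executed argument do not match, although the executed argument is itself complete and correct.
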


Restated somewhat differently, this theorem says that the virtual output $\sigma = Cx$ results in a zero dynamics
with characteristic polynomial $\beta(\lambda)$, so that the roots of $\beta(\lambda)$ are precisely the
eigenvalues of $A_0$. Thus, by choosing a Hurwitz polynomial $\beta(\lambda)$ we ensure that~\eqref{eq:LTI} is minimum phase,
which implies that all the states converge to the origin when $\sigma = \xi_1$ is constrained to zero. In view of the
previous discussion, this amount to saying that the roots of $\beta(\lambda)$ coincide with the zeros of $g(s)$. This is
the key observation that will allow us to generalize the Theorem while simplifying its proof.

Modern sliding-mode control theory considers the more general case of relative degree $r \ge 1$.
Suppose, for example, that~\eqref{eq:LTI} has relative degree $r = 2$. The second-order \emph{twisting}
controller
\begin{displaymath}
 u = -\frac{CA^2x + k_0 \sign(\xi_1) + k_1 \sign(\xi_2)}{CAB} 
\end{displaymath}
with $k_1 > |w|$ and $k_0 > k_1 + |w|$ will drive $\sigma = \xi_1$ and $\dot{\sigma} = \xi_2$ to zero in finite time
(again, regardless of $w$). More generally,
We say that an \emph{$r$-sliding mode} occurs whenever the successive time derivatives $\sigma,\dot{\sigma},\dots,\sigma^{(r-1)}$
are continuous functions of the closed-loop state-space variables and $\sigma = \dot{\sigma} = \cdots = \sigma^{(r-1)} = 0$
(i.e., $\xi = 0$). Nowadays, it is possible to construct a controller of the form
\footnote{Actually, in the original
version~\cite{levant2003}, the nonlinear counterpart of $CA^rx$ is regarded as a perturbation and omitted from the equation. Since we
are dealing with simple linear systems, we have included it to reduce the necessary gains.}
\begin{equation} \label{eq:arbr}
 u = -\frac{CA^rx + f(\xi)}{CA^{r-1}B}
\end{equation}
enforcing an $r$-sliding mode for arbitrary $r$, though it is worth mentioning that the complexity of $f(\xi)$ increases rapidly as $r$
increases (see~\cite{levant2003} for details).

One can think of at least two circumstances that justify the increased complexity of higher-order sliding-mode
controllers: prescribed degree of smoothness and prescribed order of accuracy in the face of unmodeled dynamics and
controller discretization.

Regarding smoothness, suppose that~\eqref{eq:LTI} has relative degree $r$ and suppose that a chain of $k$ integrators is cascaded to the system input,
$u_k := u^{(k)}$.
\begin{displaymath}
 u = \int_{\tau_k = 0}^t\cdots\int_{\tau_1 = 0}^{\tau_2} u_k\rd \tau_1\cdots \rd \tau_k \;.
\end{displaymath}
The relative degree of the system with new input $u_k$ and output $\sigma$ is $r+k$. Now, an $(r+k)$-sliding mode has to be enforced
by $u_k$, but the true input $u$ is at least $k-1$ times continuously differentiable.

Regarding order accuracy, it is probably best to recall the following theorem.
\begin{thm}[\cite{levant2005b}]
Let the control value be updated at the moments $t_{i}$, with $t_{i+1}-t_{i} = \tau = \mathrm{const} >0$;
$t\in \lbrack t_{i},t_{i+1})$ (the discrete sampling case). Then, controller~\eqref{eq:arbr} provides in finite time for keeping
the inequalities 
\begin{equation} \label{eq:accu}
 \left\vert      \sigma     \right\vert < \mu_{0}\tau^{r} \;, \; 
 \left\vert \dot{\sigma}    \right\vert < \mu_{1}\tau^{r-1} \;, \; \dots \; , \;
  \left\vert \sigma^{(r-1)} \right\vert < \mu_{r-1}\tau
\end{equation}
with some positive constants $\mu_{0},\mu_{1},\dots,\mu_{r-1}$.
\end{thm}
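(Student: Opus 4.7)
The plan is to reduce the statement to the pure chain of integrators in the $\xi$-coordinates and then exploit the homogeneity built into the controller~\eqref{eq:arbr}. In the normal form~\eqref{eq:nfs}, the sliding coordinates satisfy $\dot{\xi}_i = \xi_{i+1}$ for $i<r$ and $\dot{\xi}_r = CA^r x + CA^{r-1}B(u+w)$. Plugging in the controller~\eqref{eq:arbr} exactly cancels the $CA^rx$ term and leaves a clean chain of integrators with virtual input $-f(\xi)+\tilde{w}$, where $\tilde w := CA^{r-1}B\,w$ is bounded. Since the accuracy bounds~\eqref{eq:accu} are stated in terms of $\sigma,\dot{\sigma},\ldots,\sigma^{(r-1)}$, i.e.\ in terms of $\xi$ alone, it suffices to prove the inequalities for this reduced $\xi$-subsystem.

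The second and decisive step is a homogeneity-scaling argument. Levant's family of functions $f$ is constructed so that $f$ is homogeneous of degree $0$ under the dilation $\Delta_\kappa:\xi_i\mapsto\kappa^{\,r+1-i}\xi_i$ combined with the time rescaling $t\mapsto\kappa t$; consequently, the reduced closed-loop vector field $(\xi_2,\dots,\xi_r,-f(\xi))$ is homogeneous of degree $-1$ with weights $(r,r-1,\dots,1)$. I would make this symmetry explicit and observe that the discrete update rule interacts with the dilation in the expected way: if $\xi(t)$ is a Filippov/sampled trajectory with sampling period $\tau$, then the scaled curve $\xi^\kappa(t):=\Delta_\kappa\xi(t/\kappa)$ is a trajectory with sampling period $\kappa\tau$. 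In other words, the family of sampled closed-loop systems is invariant, up to relabeling of coordinates and time, under $\tau\mapsto\kappa\tau$.

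The third step establishes a reference invariant set. For the \emph{ideal} (continuous, $\tau=0$) closed loop, the defining property of the family~\eqref{eq:arbr} is finite-time convergence of $\xi$ to the origin despite $|\tilde w|\le L$. By a standard semicontinuity-of-solutions argument (Filippov regularization plus continuous dependence on the sampling perturbation over a bounded time horizon), one obtains constants $\mu_0,\mu_1,\ldots,\mu_{r-1}$ and a compact set $\Omega_1:=\{|\xi_i|\le\mu_{i-1}\}$ that is reached in finite time and is forward invariant when the sampling period equals $\tau=1$. Applying the scaling from the previous paragraph with $\kappa=\tau$ sends $\Omega_1$ to $\Omega_\tau=\Delta_\tau\Omega_1=\{|\xi_i|\le\mu_{i-1}\tau^{r+1-i}\}$, which is reached in finite time and is forward invariant for sampling period $\tau$. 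Translating $\xi_i=\sigma^{(i-1)}$ yields exactly the inequalities~\eqref{eq:accu}.

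The main obstacle I anticipate is the third step: showing that, for the reference sampling period $\tau=1$, the sampled trajectories are actually trapped in a bounded set in finite time. The homogeneity argument is clean and gives everything once a single reference invariant set is in hand, but producing that set requires a genuine Lyapunov/contraction analysis that depends on the explicit inductive construction of $f$ in~\cite{levant2003}; it is not implied by continuous-time finite-time stability alone and has to be carried through the discontinuity of $f$ on the switching manifolds. Everything else—the reduction to the chain of integrators and the conversion of a single bounded invariant set into the $\tau$-parametrized family of bounded invariant sets—is essentially bookkeeping once the homogeneity of $f$ is granted.
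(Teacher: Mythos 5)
This theorem is not proved in the paper at all: it is quoted verbatim from \cite{levant2005b} as background material, so there is no in-paper argument to compare yours against. On its own terms, your sketch reproduces the standard route of the cited source (Levant's homogeneity approach): reduction to the perturbed integrator chain, degree~$-1$ homogeneity of the closed-loop differential inclusion under the dilation with weights $(r,r-1,\dots,1)$ and time weight $1$, compatibility of that dilation with the sampling period, and rescaling of a single bounded attracting invariant set into the $\tau$-parametrized family $\{|\sigma^{(i)}|\le\mu_i\tau^{r-i}\}$. Two points deserve more care than you give them. First, with sample-and-hold the controller~\eqref{eq:arbr} cancels $CA^r x$ only at the sampling instants; the inter-sample mismatch $CA^r(x(t_i)-x(t))$ must be absorbed into the homogeneous differential inclusion (or, as in the original nonlinear formulation, $CA^r x$ is treated as part of the bounded perturbation from the outset), otherwise the scaled curve $\Delta_\kappa\xi(t/\kappa)$ is not exactly a trajectory of the $\kappa\tau$-sampled system. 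Second, the reference invariant set is more naturally obtained for some sufficiently small $\tau_0>0$ via finite-time stability plus upper semicontinuity of the Filippov solution set, and then propagated to all $\tau$ by the dilation; fixing $\tau=1$ as the reference is only cosmetically different, but the existence of that set does hinge, as you correctly flag, on the contraction analysis tied to the specific construction of $f$ in \cite{levant2003,levant2005b}, which your sketch leaves open.
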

(See~\cite{levant2005b} for the specific form of $f(\xi)$ in~\eqref{eq:arbr}.)
It is also shown in~\cite{levant2010b} that in the presence of an actuator of the form
$\tau \dot{z} = a(z,u)$, $v = v(z)$, $z \in \RE^m$, $v \in \RE$ with $u$ the input of the actuator,
$v$ its output and $\mu$ the time constant, inequalities~\eqref{eq:accu} also hold under reasonable
assumptions.

\section{Main result} \label{sec:main}

We have recalled in the previous section that, for arbitrary $r$, it is possible to enforce an $r$-sliding motion despite the presence
of perturbations. Now we show that Theorem~\ref{thm:AkUt} holds for arbitrary relative degree, so it can be used to select a virtual output
with desired relative degree and desired sliding-mode dynamics. The zero dynamics interpretation allows for a simpler proof.

\begin{thm} \label{thm:main}
If
\begin{equation} \label{eq:main}
 C = e_1 P^{-1}\gamma(A) \;,
\end{equation}
with $\gamma(\lambda) = \lambda^{n-r} + \gamma_{n-r-1}\lambda^{n-r-1} + \cdots + \gamma_{1}\lambda + \gamma_{0}$, 
then $\sigma$ is of relative degree $r$ and the roots of $\gamma(\lambda)$ are the eigenvalues of the sliding-mode dynamics in the 
intersection of the planes $\sigma = \dot{\sigma} = \cdots = \sigma^{(r-1)} = 0$.
\end{thm}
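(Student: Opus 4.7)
The plan is to exploit the zero-dynamics interpretation hinted at just before the theorem: once we show that the transfer function $g(s) = C(sI-A)^{-1}B$ has relative degree $r$ and numerator proportional to $\gamma(s)$, the conclusion follows because the eigenvalues of $A_0$ coincide with the zeros of $g(s)$.

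First I would introduce $q^\top := e_1 P^{-1}$ and note its defining property. Since $q^\top P = e_1$ and $P = \bigl[B\ AB\ \cdots\ A^{n-1}B\bigr]$, we have
\begin{equation*}
  q^\top A^k B = 0, \quad k = 0,1,\dots,n-2, \qquad q^\top A^{n-1}B = 1.
\end{equation*}
From this, the relative-degree statement follows directly by expansion: for $1\le i\le r$,
\begin{equation*}
  C A^{i-1} B
  = \sum_{j=0}^{n-r} \gamma_j\, q^\top A^{j+i-1} B,
\end{equation*}
with $\gamma_{n-r}=1$. The exponent $j+i-1$ ranges over $i-1,\dots,n-r+i-1$, and it reaches $n-1$ only when $i=r$ (at $j=n-r$). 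Hence $CA^{i-1}B = 0$ for $i<r$ and $CA^{r-1}B = 1$, so $\sigma = Cx$ has relative degree exactly $r$.

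Next I would identify the numerator polynomial of $g(s)$. A convenient intermediate identity is $q^\top(sI-A)^{-1}B = 1/\chi_A(s)$, where $\chi_A(s) = \det(sI-A)$: the product $\chi_A(s)\,q^\top(sI-A)^{-1}B = q^\top\mathrm{adj}(sI-A)B$ is a polynomial of degree at most $n-1$, while its Laurent expansion at infinity starts with $1$ (by the Markov parameters $q^\top A^k B$ computed above together with Cayley--Hamilton), so the polynomial must be the constant $1$. Using the standard polynomial identity $\gamma(s)I - \gamma(A) = (sI-A)\,N(s,A)$, valid for the matrix polynomial $N(s,A)$ of degree $n-r-1$ in $s$, and the fact that $A$ commutes with $(sI-A)^{-1}$, I compute
\begin{equation*}
  g(s)
  = q^\top \gamma(A)(sI-A)^{-1}B
  = \gamma(s)\,q^\top(sI-A)^{-1}B \;-\; q^\top N(s,A) B
  = \frac{\gamma(s)}{\chi_A(s)} - R(s),
\end{equation*}
where $R(s) := q^\top N(s,A) B$ is a polynomial of degree at most $n-r-1$. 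Since $g(s)$ and $\gamma(s)/\chi_A(s)$ both vanish as $s\to\infty$ (both are strictly proper), so does $R(s)$; a polynomial that vanishes at infinity is identically zero, hence $g(s) = \gamma(s)/\chi_A(s)$.

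Finally, since $\deg\gamma = n-r$ equals the order of the zero dynamics and the roots of $\gamma(s)$ are precisely the zeros of $g(s)$, the fact recalled in the preliminaries (eigenvalues of $A_0$ $=$ zeros of $g(s)$) yields the claim about the sliding-mode dynamics on $\sigma=\dot\sigma=\cdots=\sigma^{(r-1)}=0$. The main obstacle in presenting this is not conceptual but notational: one must justify cleanly that $R(s)$ is actually a polynomial (not merely a rational function) and that it has small enough degree to force $R\equiv 0$ from its decay at infinity; both facts follow from the structure of the commutator identity $\gamma(s)I-\gamma(A) = (sI-A)N(s,A)$.
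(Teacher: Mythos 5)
Your proof is correct, but it follows a genuinely different route from the paper's. The paper transforms to the controller canonical form via $T = P\hat{P}^{-1}$, uses the standard identities $e_1\hat{P}^{-1} = [\,1\ 0\ \cdots\ 0\,]$ and the shift action of $\hat{A}$ to conclude that $\hat{C}$ is just the coefficient vector of $\gamma$ padded with zeros, and then simply reads off $g(s) = \gamma(s)/\chi_A(s)$ from the canonical-form transfer function; the relative degree and the zero locations are then visible by inspection of numerator and denominator degrees. You instead work coordinate-free: the annihilation properties $q^\top A^kB = 0$ for $k \le n-2$ and $q^\top A^{n-1}B = 1$ of $q^\top = e_1P^{-1}$ give the relative degree directly from the Markov parameters, and the resolvent computation via $q^\top(sI-A)^{-1}B = 1/\chi_A(s)$ together with the factorization $\gamma(s)I - \gamma(A) = (sI-A)N(s,A)$ and the strict properness of $g$ forces $R \equiv 0$ and hence $g(s) = \gamma(s)/\chi_A(s)$. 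All the individual steps check out (in particular the degree bound $\deg R \le n-r-1$ is more than you need --- any polynomial vanishing at infinity is zero). What each approach buys: the paper's is shorter once the canonical-form facts \eqref{eq:As} are granted and makes the structure of $C$ transparent, but it proves the relative-degree claim only implicitly through the degree gap of $g(s)$; yours avoids any change of coordinates, makes the relative-degree computation explicit (including the normalization $CA^{r-1}B = 1$), and establishes $C\,\mathrm{adj}(sI-A)B = \gamma(s)$ directly in the original coordinates. Both arguments share the same (standard, minor) implicit assumption that the roots of $\gamma$ are read off as the zero-dynamics eigenvalues without worrying about possible common factors between $\gamma(s)$ and $\chi_A(s)$.
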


\begin{proof}
Let us assume that the system is given in controller canonical form with system matrices $\hat{A}$ and $\hat{B}$.
To verify~\eqref{eq:main}, we will show that for $\hat{C} = e_1 \hat{P}^{-1}\gamma(\hat{A})$,
the numerator of $g(s) = \hat{C}(sI - \hat{A})^{-1}\hat{B}$ is equal to $\gamma(s)$.

It is a standard result that, for a system in controller canonical form, we have~\cite{williams}
\begin{equation} \label{eq:As}
\begin{split}
e_1 \hat{P}^{-1} &= \begin{bmatrix} 1 & 0 & \cdots & 0 & 0 \end{bmatrix} \\
 \begin{bmatrix}
  1 & 0 & \cdots & 0 & 0
 \end{bmatrix} \hat{A} \quad \, &= 
 \begin{bmatrix}
  0 & 1 & \cdots & 0 & 0
 \end{bmatrix} \\
                       & \; \vdots  \\
 \begin{bmatrix}
  1 & 0 & \cdots & 0 & 0
 \end{bmatrix} \hat{A}^{n-2} &= 
 \begin{bmatrix}
  0 & 0 & \cdots & 1 & 0
 \end{bmatrix} \\
 \begin{bmatrix}
  1 & 0 & \cdots & 0 & 0
 \end{bmatrix} \hat{A}^{n-1} &= 
 \begin{bmatrix}
  0 & 0 & \cdots & 0 & 1
 \end{bmatrix} \;.
\end{split}
\end{equation}
It then follows that
\begin{displaymath}
\hat{C} = \begin{bmatrix} \gamma_{0} & \gamma_{1} & \cdots & \gamma_{n-r-1} & 1 & 0 & \cdots & 0 \end{bmatrix} \;.
\end{displaymath}

Since $\hat{A},\hat{B}$ and $\hat{C}$ are in controller canonical form, the transfer function is simply
\begin{displaymath}
 g(s) = \frac{s^{n-r} + \gamma_{n-r-1}s^{n-r-1} + \cdots + \gamma_{1}s + \gamma_{0}}{s^n + a_{n-1}s^{n-1} + \cdots + a_1s + a_0} \;,
\end{displaymath}
which shows that the relative degree is $r$. Since the numerator is equal to $\gamma(s)$, the eigenvalues of the sliding-mode dynamics
are equal to the roots of $\gamma(s)$.

Now, to address the general case, consider the transformation $T = P\hat{P}^{-1}$, which is such that
$\hat{A} = T^{-1} A T$. We have $C = \hat{C}T^{-1}$, that is, $C = e_1 \hat{P}^{-1}\gamma(\hat{A})T^{-1}$.
Finally, from $\hat{P}^{-1} = P^{-1}T$ and $\gamma(\hat{A}) = T^{-1}\gamma(A)T$ we recover~\eqref{eq:main}.
\end{proof}

\section{Example} \label{sec:exa}

Consider the linearized model of a real inverted pendulum on a cart~\cite{fantoni}
\begin{equation}  \label{eq:sys}
 \dot{x} = 
  \begin{bmatrix}
    0 & 1 &     0 & 0 \\
    0 & 0 & -1.56 & 0 \\
    0 & 0 &     0 & 1 \\
    0 & 0 & 46.87 & 0
  \end{bmatrix}
  x + 
  \begin{bmatrix}
   0 \\ 0.97 \\ 0 \\ -3.98
  \end{bmatrix}
  (u + w) \;,
\end{equation}
where $x_1$, $x_2$, $x_3$ and $x_4$ are the position and velocity of the cart, and the angle and angular velocity of the pole, respectively.
The system is controllable and the open-loop characteristic polynomial is $\lambda^2(\lambda+6.85)(\lambda-6.85)$. Suppose that we want to regulate
the state to zero, in spite of any perturbations satisfying the bound $|w| \le 1$.

\subsection{First-order sliding mode control}

\begin{figure}
\begin{center}
 \includegraphics[width = 0.80\textwidth]{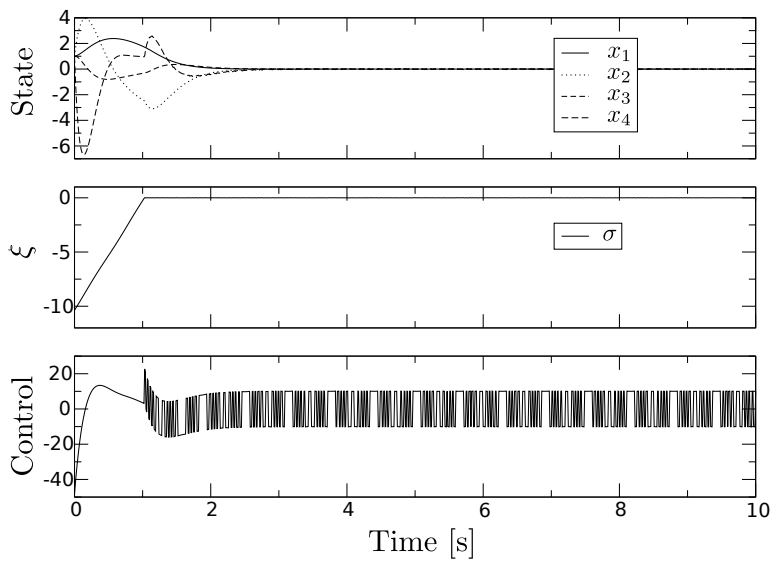}
\end{center}
\caption{Simulation results for a first-order sliding mode controller. The system is perturbed by
$w = 0.5\sin(10 t)$. The control~\eqref{eq:ford} is sampled and held every $\tau = 0.001$ seconds.}
\label{fig:1ordx}
\end{figure}

Consider the problem of designing a first-order sliding mode controller with sliding-mode dynamics having eigenvalues
$z_i = -5$, $i = 1,2,3$. Applying~\eqref{eq:main} with $\gamma(\lambda) = (\lambda+5)^3$ gives
\begin{displaymath}
 C =
  \begin{bmatrix}
   -3.2002 & -1.9201 & -4.5411 & -0.7166
  \end{bmatrix} \;,
\end{displaymath}
which in turn yields the expected transfer function
\begin{displaymath}
 g(s) = C\left(sI - A\right)^{-1}B = \frac{\left( s+5 \right)^3}{s^2(s+6.85)(s-6.85)} \;.
\end{displaymath}
To enforce a sliding motion on the surface $\sigma = 0$ we apply the control~\eqref{eq:ford} with
$k_0 = 10$. Fig.~\ref{fig:1ordx} shows the simulated response when
\begin{displaymath}
 w = \sin(10t) \quad  \text{and} \quad
  x_0^\top = 
  \begin{bmatrix}
   1 & 1 & 1 & 1
  \end{bmatrix} 
\end{displaymath}
and the control law is sampled and held every $\tau = 0.001$ seconds. It can be seen that, once the
state reaches the sliding surface, the state converges exponentially to the origin, despite $w$.

\begin{figure}
\begin{center}
 \includegraphics[width = 0.80\textwidth]{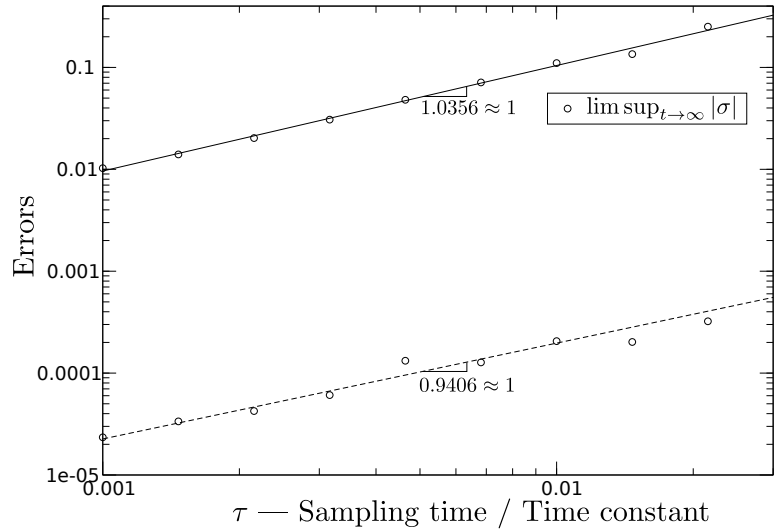}
\end{center}
\caption{First-order sliding mode. Order of the error as a function of the sampling period (solid) and
 the actuator time-constant (dashed). The error order matches the sliding-mode order almost exactly.}
\label{fig:1orde}
\end{figure}

To verify the order of accuracy established in~\eqref{eq:accu}, we take logarithms on
both sides of the inequalities (the base is not important),
\begin{displaymath}
 \log(|\sigma^{(i)}|) < \log(\mu_i) + (r-i)\log(\tau) \;, \quad i = 0,\dots,r-1 \;.
\end{displaymath}
Notice that, on a logarithmic scale, the right-hand is a straight line with slope $r-i$ and ordinate at the
origin $\log(\mu_i)$. To verify that the order of the error $|\sigma^{(i)}|$ as a function of $\tau$ is indeed
$r-i$, the closed-loop system was simulated for several values of $\tau$, both for a zero order hold with sampling
period $\tau$ and for a (previously neglected) actuator of the form $\tau \dot{v} = -v + u$.
We recorded the maximum error after the transient, $\limsup_{t \to \infty} |\sigma^{(i)}|$. The best linear
interpolation on a least square sense was then computed to recover an estimate of $\log(\mu_i)$ and $r-i$.
Fig.~\ref{fig:1orde} shows that the estimations agree well with~\eqref{eq:accu}.

\subsection{Second-order sliding mode control}

Suppose now that we desire a sliding-mode dynamics with eigenvalues $z_i = -5$, $i = 1,2$.
Applying~\eqref{eq:main} with $\gamma(\lambda) = (\lambda+5)^2$ gives
\begin{displaymath}
 C =
  \begin{bmatrix}
   -0.6400 & -0.2560 & -0.4062 & -0.0621
  \end{bmatrix} 
\end{displaymath}
and
\begin{displaymath}
 g(s) = \frac{\left( s+5 \right)^2}{s^2(s+6.85)(s-6.85)} \;.
\end{displaymath}

\begin{figure}
\begin{center}
 \includegraphics[width = 0.80\textwidth]{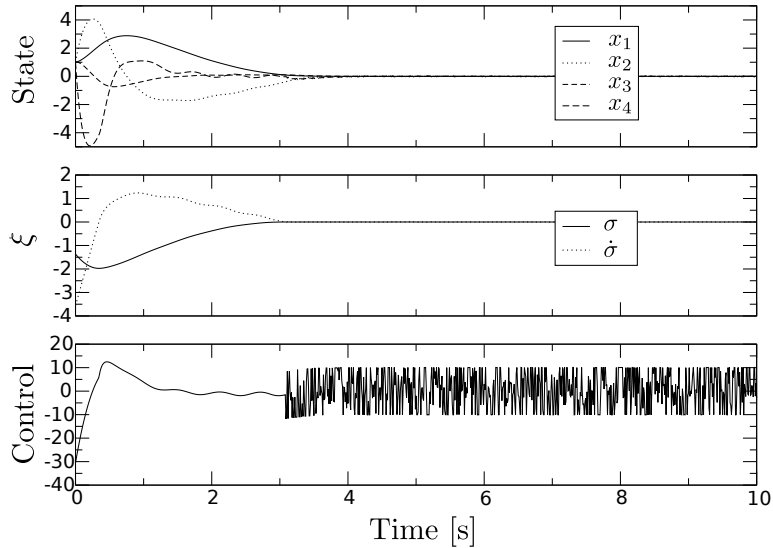}
\end{center}
\caption{Simulation results for a second-order sliding mode controller. The system is perturbed by
$w = 0.5\sin(10 t)$. The control~\eqref{eq:sord} is sampled and held every $\tau = 0.001$ seconds.}
\label{fig:2ordx}
\end{figure}

\begin{figure}
\begin{center}
 \includegraphics[width = 0.80\textwidth]{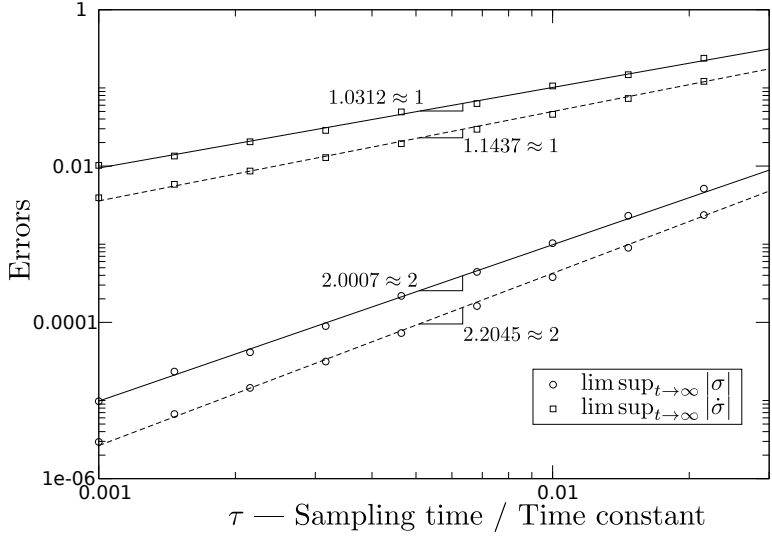}
\end{center}
\caption{Second-order sliding mode. Order of the errors as functions of the sampling period (solid)
 and the actuator time-constant (dashed). The error order for $\sigma$ matches well with the sliding-mode order.}
\label{fig:2orde}
\end{figure}

To enforce a second-order sliding motion on the surface $\sigma = \dot{\sigma} = 0$ we apply the
control~\eqref{eq:arbr} with $f(\xi)$ as in~\cite{levant2005b}, that is,
\begin{equation} \label{eq:sord}
 u = -\frac{1}{CAB}\left( CA^2 x +  10 \frac{\dot{\sigma} + |\sigma|^{1/2}\sign(\sigma)}{|\dot{\sigma}| + |\sigma|^{1/2}} \right) \;.
\end{equation}
Fig.~\ref{fig:2ordx} shows the simulated response for the same perturbation, initial conditions
and sampling time as before. It can be seen that, once the state reaches the sliding surface, the
state converges exponentially to the origin, again despite $w$. Fig.~\ref{fig:2orde} shows
the system accuracy for several sampling times and several actuator time-constants. Inequality~\eqref{eq:accu} is again verified.

\subsection{Third-order sliding mode control}

Consider the problem of designing a third-order sliding mode controller with sliding-mode dynamics having the eigenvalue 
$z_{1} = -5$. Applying~\eqref{eq:main} with $\gamma(\lambda) = \lambda + 5$ gives
\begin{displaymath}
 C =
  \begin{bmatrix}
   -0.1280 & -0.0256 & -0.0310 & -0.0062
  \end{bmatrix} \;,
\end{displaymath}
which in turn yields the expected transfer function
\begin{displaymath}
 g(s)  = C\left(sI - A\right)^{-1}B = \frac{s+5}{s^2(s+6.85)(s-6.85)} \;.
\end{displaymath}

\begin{figure}
\begin{center}
 \includegraphics[width = 0.80\textwidth]{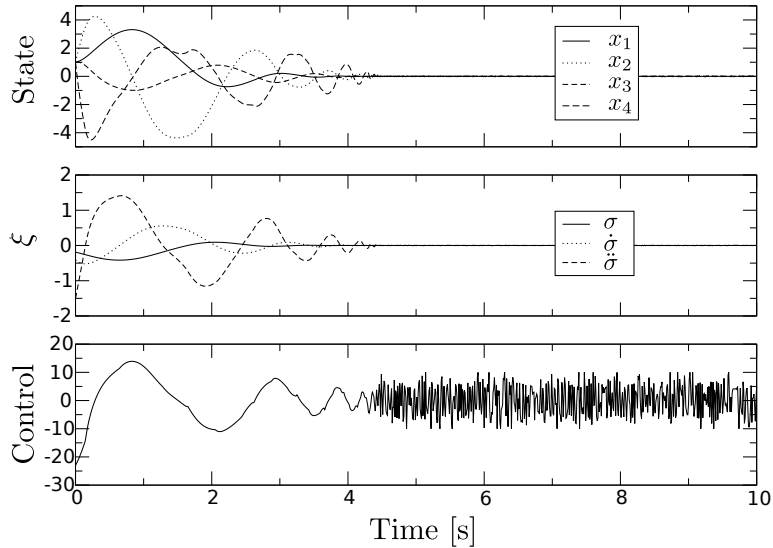}
\end{center}
\caption{Simulation results for a third-order sliding mode controller. The system is perturbed by
$w = 0.5\sin(10 t)$. The control~\eqref{eq:tord} is sampled and held every $\tau = 0.001$ seconds.}
\label{fig:3ordx}
\end{figure}

To enforce a third-order sliding motion on the surface $\sigma = \dot{\sigma} = \ddot{\sigma} = 0$ we
apply
\begin{equation} \label{eq:tord}
 u = -\frac{1}{CA^2B}\Bigg( CA^3 x +
  10 
  \frac{\ddot{\sigma} + 2(|\dot{\sigma}|+|\sigma|^{2/3})^{-1/2}(\dot{\sigma}+|\sigma|^{2/3}\sign(\sigma))}
  {|\ddot{\sigma}| + 2(|\dot{\sigma}|+|\sigma|^{2/3})^{1/2}}
 \Bigg) \;.
\end{equation}
Fig.~\ref{fig:3ordx} shows the simulated response for the same perturbation, initial conditions
and sampling time as before. Again, the state converges exponentially to the origin once the state reaches
the sliding surface, despite $w$.

\begin{figure}
\begin{center}
 \includegraphics[width = 0.80\textwidth]{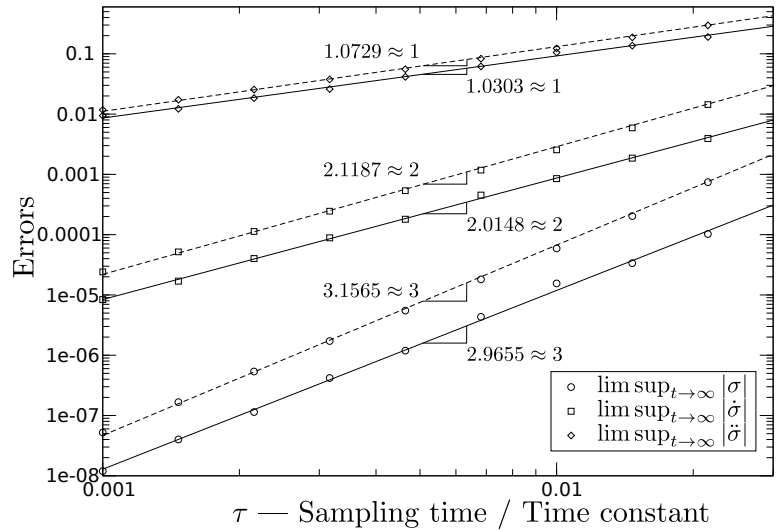}
\end{center}
\caption{Third-order sliding mode. Order of the errors as functions of the sampling period (solid)
 and the actuator time-constant (dashed). The error order for $\sigma$ matches well with the sliding-mode order.}
\label{fig:3orde}
\end{figure}

\section{Conclusions} \label{sec:conc}

We have presented a generalization of the well-known formula by Ackermann and Utkin. A complete design
cycle can now be easily carried out. Formula~\eqref{eq:main} allows the control designer to first specify
a desired sliding-dynamics of any order. Then, the sliding-mode dynamics can be enforced using the corresponding
higher-order sliding mode controller given in~\cite{levant2005b}.

It is clear that there is a trade-off between complexity of a sliding mode controller, accuracy and order
reduction of the equations of motion. By being able to choose the relative degree of the system, the designer can now
decide on the right compromise, depending on the particular application at hand.
 
We have used the notion of accuracy in the face of sample and hold as our main motivation for using
higher-order SMC but other criteria, such as smoothness, can also prompt the use of higher-order SMC.

\bibliographystyle{IEEEtran}
\bibliography{IEEEabrv,PP_HOSM}

\end{document}